\DeclareFontShape{OT1}{cmtt}{bx}{n}{<5><6><7><8><9><10><10.95><12><14.4><17.28><20.74><24.88>cmttb10}{}
\numberwithin{equation}{section}
\newcommand{\beq}{\begin{equation}}
\newcommand{\eeq}{\end{equation}}
\newtheorem{Thm}{Theorem}[section]
\newtheorem{lemma}[Thm]{Lemma}
\newtheorem{corollary}[Thm]{Corollary}
\newtheorem{theorem}[Thm]{Theorem}
\theoremstyle{definition}
\newtheorem{example}[Thm]{Example}
\newcommand{\bex}{\begin{example}}
\newcommand{\eex}{\end{example}}
\newcommand{\mbu}{\mathbf{u}}
\newcommand{\mbx}{\mathbf{x}}
\newcommand{\mbz}{\mathbf{z}}
\newcommand{\mbD}{\mathbf{D}}
\newcommand{\mbE}{\mathbf{E}}
\newcommand{\mbF}{\mathbf{F}}
\newcommand{\mbI}{\mathbf{I}}
\newcommand{\mbJ}{\mathbf{J}}
\newcommand{\mbK}{\mathbf{K}}
\newcommand{\mbQ}{\mathbf{Q}}
\newcommand{\mcA}{\mathcal{A}}
\newcommand{\mcC}{\mathcal{C}}
\newcommand{\mcD}{\mathcal{D}}
\newcommand{\mcI}{\mathcal{I}}
\newcommand{\mcQ}{\mathcal{Q}}
\newcommand{\msD}{\mathscr{D}}
\newcommand{\msL}{\mathscr{L}}
\newcommand{\mblam}{\bm{\lambda}}
\newcommand{\p}[0]{\partial}
\newcommand{\gtt}{\textup{\texttt{g}}}
\newcommand{\gttb}{\textup{\textbf{\texttt{g}}}}
\newcommand{\upd}{\mathrm{d}}
\newcommand{\bdag}{\boldsymbol{\dag}}
\newcommand{\mccA}{\tensor*[^{\mathrm{c}\!}]{{\mathcal{A}}}{}}
\newcommand{\uaJ}{u^\alpha_{\mbJ}}
\newcommand{\R}[0]{\mathbb{R}}
\begin{document}
\title{Conservation laws that depend on functions and PDE reduction: extending Noether $1\tfrac{1}{2}$}
\author{Peter E. Hydon\thanks{P.E.Hydon@kent.ac.uk}
        \and John R. King\thanks{John.King@nottingham.ac.uk}
}
\date{}

%
%
%

\maketitle

\begin{abstract}
This paper develops methods for simplifying systems of partial differential equations that have families of conservation laws which depend on functions of the independent or dependent variables. In some cases, such methods can be combined with reduction using families of symmetries, giving a multiple reduction that is analogous to the double reduction of order for ordinary differential equations with variational symmetries. Applications are given, including a widely-used class of pseudoparabolic equations and several mean curvature equations.   

\end{abstract}

\section{Introduction}

Many interesting systems of partial differential equations (PDEs) have Lie pseudogroups of symmetries that depend on arbitrary functions of one independent variable. In 1982, Ovsiannikov \cite{ovsiannikov} showed that these symmetries can be factored out to reduce the PDE system to a simpler form involving only differential invariants, a technique called group splitting. If the simpler system can be solved, so can the original system, because the action of the symmetry pseudogroup on each solution of the simplified system yields a family of solutions of the original system. This approach has been applied widely, and has recently been made completely algorithmic by Thompson \& Valiquette \cite{thomval} using moving frames.

By contrast, relatively little is known about simplifying PDE systems by using families of conservation laws that depend on arbitrary functions of some independent or dependent variables. One indication that this might be fruitful is the following well-known reduction of an Euler--Lagrange ordinary differential equation (ODE), which uses a first integral. Given a Lagrangian functional of the form
\[
\mathscr{L}=\int{L(x,u',u'')}\,\upd x,
\]  
the Euler--Lagrange equation is the fourth-order ODE
\[
\omega(x,u',u'',u''',u''''):= \frac{\upd^2 }{\upd x^2}\left(\frac{\p L}{\p u''}\right) -\,\frac{\upd }{\upd x}\left(\frac{\p L}{\p u'}\right)=0.
\]
This ODE is invariant under all translations in $u$, so a standard symmetry reduction would yield a third-order ODE and a quadrature:
\[
\omega(x,v,v',v'',v''')=0,\qquad u=\int v\, \upd x + c,
\]
where $c$ is an arbitrary constant\footnote{We use $c$ and $c_i$ to denote arbitrary constants from here on.}. However, one can do better than this. The symmetry is variational, so Noether's Theorem applies, giving the first integral
\[
\phi(x,u',u'',u'''):= \frac{\upd }{\upd x}\left(\frac{\p L}{\p u''}\right)-\,\frac{\p L}{\p u'}=c_1.
\]
This inherits the translation symmetries, giving a double reduction to a family of second-order ODEs and a quadrature:
\[
\phi(x,v,v',v'')=c_1,\qquad u=\int v\, \upd x + c_2
\]
The double reduction contains all solutions of the original ODE. If the second-order ODE family can be solved, the general solution of the Euler--Lagrange equation is obtained by quadrature. Even if the solution can be found only for one value of $c_1$ (commonly $c_1=0$), this yields a three-parameter (typically singular) family of solutions of the fourth-order Euler--Lagrange equation.

Sj\"{o}berg \cite{sjo} has partly extended double reduction to non-variational PDEs in two independent variables. Sj\"{o}berg's approach restricts attention to those solutions that are invariant under a one-parameter Lie group of point symmetries which is compatible with a given conservation law. Once such a symmetry group has been found, the conservation law yields an invariant first integral of the ODE that determines the group-invariant solutions. Recently, Anco \& Gandarias \cite{ancogand} strengthened this approach by developing methods of finding all conservation laws that are compatible with a given finite-dimensional symmetry group of the original PDE, without restricting the number of independent variables. In some instances, this produces a complete reduction to quadrature, so that the group-invariant solutions can be found explicitly.

The current paper describes conservation-law reductions that apply to all solutions of a given PDE system, not just group-invariant solutions. After a brief outline of some basic theory (Section 2), we develop the theory of reduction using conservation laws that depend on arbitrary functions of some independent variables (Section 3) or dependent variables (Section 4). We find that, unlike in the ODE double reduction above, symmetries that depend on arbitrary functions are not necessarily symmetries of the reduced PDE. Instead, they act as equivalence transformations that split the reduced PDE into a few inequivalent cases. Commonly, one case inherits symmetries of the original PDE, making at least one further reduction possible by group splitting (Section 5). The resulting methods are applied to a large class of pseudoparabolic equations (Section 6) and to some other well-known PDEs (Section 7), including cases involving an arbitrary function of the dependent variable.

\section{Preliminaries}

\subsection{Orthonomic systems of PDEs}

We consider a given real analytic system of partial differential equations (PDEs) on $\R^N$ in orthonomic form (see below). Independent variables $\mathbf{x} = (x^1,\dots, x^N)$ and dependent variables $\mathbf{u}=(u^1,\dots,u^M)$ are used to state general results; the Einstein summation convention applies throughout. For particular examples, however, commonly-used notation is adopted where this aids clarity.

Derivatives of each $u^\alpha$ are written as $\uaJ$, where $\mbJ=(j^1,\dots,j^N)$ is a multi-index; each $j^i$ denotes the number of derivatives with respect to $x^i$, so $u^\alpha_{\mathbf{0}}= u^\alpha$. The variables $x^i$ and $\uaJ$ can be regarded as coordinates on the infinite jet space (see Olver \cite{olver}). With this approach, the partial derivative with respect to $x^i$ is replaced by the \textit{total derivative},
\[
D_i=\frac{\p}{\p x^i}+u^\alpha_{\mbJ i}\,\frac{\p}{\p \uaJ}\,,\quad\text{where}\quad \mbJ i=(j^1,\dots,j^{i-1},j^i+1,j^{i+1},\dots, j^N).
\]
(Total derivatives commute with one another.) To keep the notation concise, let
\[
\mbD_\mbJ=D_1^{j^1}D_2^{j^2}\dots D_p^{j^N},\qquad |\mbJ|=j^1+\cdots+j^N.
\]
Note that $\uaJ=\mbD_\mbJ u^\alpha$ is a $|\mbJ|$-th order derivative and $u^\alpha_{\mbJ+\mbK}=\mbD_\mbK\uaJ$.

From here on, we use $[\mbu]$ to represent $\mbu$ and finitely many of its total derivatives; more generally, square brackets around an expression denote the expression and as many of its total derivatives as are needed. All functions of $(\mbx,[\mbu])$ are assumed to be analytic, at least locally.

The variables $\uaJ$ may be ranked using any total order $\preceq$ that satisfies the following positivity conditions:
\[
(i)\,\ u^\alpha\prec \uaJ\,,\quad\mbJ\neq\mathbf{0},\qquad (ii)\,\ u^\beta_{\mbI}\prec \uaJ\Longrightarrow D_{\mbK }u^\beta_{\mbI}\prec D_{\mbK }\uaJ\,.
\]
The \textit{leading term} in a differential expression is the highest-ranked $\uaJ$ in the expression, and the rank of the expression is the rank of its leading term (see Rust \cite{rust}).
A system of $m$ PDEs, denoted $\mcA(\mbx,[\mbu])=0$, is orthonomic if its components are of the form
\beq\label{inveqs}
\mcA_\mu=u^{\alpha_\mu}_{\mbJ_\mu}-\omega_\mu(\mbx,[\mbu]),\qquad \mu =1,\dots, m,
\eeq
subject to the following conditions:
\begin{enumerate}
	\item for each $\mu$, $u^{\alpha_\mu}_{\mbJ_\mu}$ is ranked higher than every $\uaJ$ that is an argument of $\omega_\mu\,$;
	\item whenever $\nu\neq\mu$, the leading term $u^{\alpha_\nu}_{\mbJ_\nu}$ is neither $u^{\alpha_\mu}_{\mbJ_\mu}$ nor a derivative of $u^{\alpha_\mu}_{\mbJ_\mu}$;
	\item no $u^{\alpha_\mu}_{\mbJ_\mu+\mbK}$ is an argument of any $\omega_\nu\,$.
\end{enumerate}
Most systems arising from applications can be written in at least one orthonomic form (commonly several, depending on which ranking is used).

An involutive system has no integrability conditions, and yields a formally well-posed initial-value problem (see Seiler \cite{seiler} for details). Every orthonomic system can be completed to an involutive system, denoted $\mccA(\mbx,[\mbu])=0$, by appending all integrability conditions; Marvan \cite{marvan} gives an algorithm for doing this in a finite number of steps. For simplicity, we restrict attention to systems whose involutive completion is orthonomic, subject to the $m$ lowest-ranked components in $\mccA$ being the $\mcA_\mu$ in \eqref{inveqs}.
The leading terms in $\mccA$ and their derivatives (of all orders) are called \textit{principal derivatives}; all other $\uaJ$ are called \textit{parametric derivatives}. Given arbitrary initial data on the set $\mcI$ of all parametric derivatives, the corresponding power-series solution is constructed by using $\mcA=0$ and its prolongations to determine the values of all principal derivatives\footnote{\label{ftwo}The system $\mcA=0$ need not be involutive, because $\mccA$ can be written in terms of $[\mcA]$ and the parametric derivatives.}.

Consequently, any function $f(\mbx,[\mbu])$ may be written, with a slight abuse of notation, as an equivalent function $f(\mbx,\mcI,[\mcA])$. This change of coordinates on the infinite jet space uses the variables $\mbD_\mbK\mcA_\mu,\ |\mbK|\geq 0$, rather than the principal derivatives. It is extremely useful, as it enables any condition of the form
\[
f(\mbx,[\mbu])=0\quad\text{when}\quad[\mcA=0]
\]
(such as the determining equations for symmetries and conservation laws) to be written instead as an equation:
\[
f|_0:=f(\mbx,\mcI,[0])=0.
\]

Some orthonomic systems have syzygies between the components of $\mcA$;
these are differential relations that become identities when they are written in terms of $(\mbx,\mbu)$. If there are syzygies, the coordinates $\mbD_\mbK\mcA_\mu$ have some redundancy.
One resolution is to remove spare coordinates. For conservation laws and symmetries, however, it is easier to accept such redundancies and take them into account in the calculations.

\subsection{Conservation laws: the basics}

A (scalar-valued) \emph{conservation law} for a system $\mcA=0$ on $\mathbb{R}^N$ is a divergence expression,
\beq\label{cldef}
\mcC(\mbx,[\mbu])= \text{Div}\,\mbF= D_iF^i(\mbx,[\mbu]),
\eeq
that is zero on all solutions of the PDE:
\beq\label{cleq}
\mcC|_0=0.
\eeq
For orthonomic systems, \eqref{cleq} implies that there exist functions $f^{\mu,\mbJ}$ such that
\beq\label{clA}
\mcC=f^{\mu,\mbJ}(\mbx,[\mbu])\,\mbD_\mbJ\mcA_\mu;
\eeq
see Anco \cite{anco} for details in the involutive case (and Footnote \ref{ftwo}). Indeed, many conservation laws have more than one such representation.
A conservation law is \textit{trivial} if either of the following conditions hold:
\begin{enumerate}
  \item all components $F^i$ are zero when $[\mcA=0]$, that is, $F^i|_0=0$;
  \item $\mcC=0$ whether or not $[\mcA=0]$ holds (for instance, when $N=3$ and $\mbF$ is a `total curl').
\end{enumerate}
More generally, $\mcC$ is trivial if and only if it is a linear superposition of these two kinds of trivial conservation laws (see Olver \cite{olver}), in which case there exist $\widehat{F}^i$ such that
\beq\label{cltriv}
\mcC=D_i\widehat{F}^i\quad\text{and}\quad \widehat{F}^i|_0=0.
\eeq
Two conservation laws, $\mcC_1,\mcC_2$, are \textit{equivalent} if $\mcC_1\!\!\:-\mcC_2$ is trivial. Every member of an equivalence class of conservation laws expresses the same information about the set of solutions, so equivalent conservation laws are generally treated as being identical.

A conservation law $\mcC$ is in \textit{characteristic form} if
\[
\mcC=\mcQ^\mu\mcA_\mu,
\]
for some functions $\mcQ^\mu(\mbx,[\mbu])$; the $m$-tuple $\mcQ=(\mcQ^1,\dots,\mcQ^m)$ is called a \textit{multiplier}\footnote{The term \textit{characteristic} is also widely-used, but we use multiplier to prevent confusion with symmetry characteristics.} for $\mcC$. For a given conservation law, one can integrate any of its representations \eqref{clA} by parts to obtain an equivalent conservation law in characteristic form. This yields functions $\widetilde{F}^i$ satisfying
\beq\label{clchar1}
\mcQ^\mu\mcA_\mu=D_i\widetilde{F}^i,\quad\text{where}\quad\mcQ^\mu=(-1)^{|\mbJ|}\,\mbD_\mbJ f^{\mu,\mbJ}(\mbx,[\mbu]).
\eeq
The multiplier $\mcQ$ is trivial if $\mcQ|_0=0$. Two multipliers, $\mcQ_1$ and $\mcQ_2$, are equivalent if $\mcQ_1\!\!\:-\mcQ_2$ is trivial. Multipliers are found by solving an overdetermined system of linear PDEs. The solution depends on $[\mbu]$ and functions of $\mbx$ that are subject to linear constraints which may or may not be solvable; some or all of these functions may be unconstrained.
Anco \cite{anco} includes full details of how to determine multipliers in a comprehensive review of conservation laws.

\subsection{Generalized symmetries}

In 1918, Noether \cite{noether} famously introduced the idea of generalized symmetries of a system of PDEs (see Olver \cite{olvnoe} for a discussion of their significance).
For a system $\mcA=0$ with $M$ components, the $M$-tuple $\mbQ=(Q^1(\mbx,[\mbu]),\dots,Q^M(\mbx,[\mbu]))$ is the \textit{characteristic} of a generalized symmetry if the operator
\[
X=\mbD_{\mathbf{J}}\big(Q^{\alpha}(\mbx,[\mbu])\big) \frac{\p}{\p\, \mbD_{\mathbf{J}}u^{\alpha}}
\]
satisfies the \textit{linearized symmetry condition},
\beq\label{lsc}
X(\mcA_{\alpha})\big|_0 =0.
\eeq
Generalized symmetries are not necessarily associated with a Lie group, but the set of all characteristics for a given system is a Lie algebra under the bracket with components
\[
[\mbQ_1,\mbQ_2]^\alpha=X_1(Q^\alpha_2)-X_2(Q^\alpha_1),\qquad \alpha=1,\dots,M.
\]
In particular, the one-parameter Lie group of point symmetries
\[
(\mbx,\mbu)\longmapsto(\widehat{\mbx}(\mbx,\mbu;\varepsilon),\widehat{\mbu}(\mbx,\mbu;\varepsilon)),
\]
defined by
\[
\frac{\upd \widehat{x}^i}{\upd\varepsilon}\,=\xi^i(\widehat{\mbx},\widehat{\mbu}),\qquad \frac{\upd \widehat{u}^\alpha}{\upd\varepsilon}\,=\eta^\alpha(\widehat{\mbx},\widehat{\mbu}),\qquad \left(\widehat{\mbx}(\mbx,\mbu;0),\widehat{\mbu}(\mbx,\mbu;0)\right)=(\mbx,\mbu),
\]
has the characteristic whose components are
\[
Q^\alpha=\eta^\alpha(\mbx,\mbu)-\xi^i(\mbx,\mbu)u^\alpha_i.
\]

\subsection{Variational symmetries and conservation laws}

The formal adjoint of a differential operator $\mcD$ is the unique differential operator $\mcD^{\bdag}$ such that
\[
F\,\mcD G - \left(\mcD^{\bdag} F\right)G
\]
is a divergence for all smooth functions $F$ and $G$. In particular,
\[
(\mbD_\mbJ)^{\bdag}=(-\mbD)_\mbJ:=(-1)^{|\mbJ|}\,\mbD_\mbJ.
\]
Consequently, the \textit{Euler--Lagrange operator} with respect to any variable $v$ that depends on $\mbx$ is
\[
\mbE_v=(-\mbD)_\mbJ\,\frac{\p }{\p v_{\mbJ}}\,,\qquad v_{\mbJ}:=\mbD_{\mbJ}v,
\]
where the total derivatives $\mbD_\mbJ$ now include all dependent variables, including $v$. It is well-known that a function $f(\mbx,[\mbu])$ is a divergence if and only if
\[
\mbE_{u^\alpha}\{f(\mbx,[\mbu])\}=0,\qquad \alpha=1,\dots, M.
\]
The following useful generalization of this result applies to functions that depend on $\mbx$, $[v]$ and a set, $\mbz$, that consists of other (subsidiary) dependent variables and their derivatives.

\begin{lemma}\label{princel}
	Suppose that no syzygies exist between the variables $(\mbx,\mbz,[v])$. Suppose also that for all $i$, each subsidiary $z^a$ on which $F^i(\mbx,\mbz,[v])$ depends satisfies the condition that $D_iz^a$ is a function of $(\mbx,\mbz)$ only. Then $\mbE_v\!\left(D_iF^i\right)=0$. If $\mbE_{v}\big\{f(\mbx,\mbz,[v])\big\}=0$ then $f(\mbx,\mbz,[v])-f(\mbx,\mbz,[0])$ is a divergence.
\end{lemma}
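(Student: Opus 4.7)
The plan is to treat the two assertions separately, both leaning on the observation that the hypothesis on $D_iz^a$ forces the total derivatives, when applied to functions of $(\mbx,\mbz,[v])$, to behave as they do when only the $v$-jet is present. For the first assertion, I would unpack the definition of $\mbE_v$ and compute $\p(D_iF^i)/\p v_\mbK$. Writing
\[
D_i = \frac{\p}{\p x^i} + (D_iz^a)\frac{\p}{\p z^a} + v_{\mbJ i}\frac{\p}{\p v_\mbJ},
\]
and using that $D_iz^a$ has no $v$-jet dependence, one recovers the classical commutation relation
\[
\frac{\p}{\p v_\mbK}\,D_i = D_i\,\frac{\p}{\p v_\mbK} + \frac{\p}{\p v_{\mbK-\mbe_i}},
\]
where the final term is dropped when $k^i=0$. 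Applying $(-\mbD)_\mbK$, summing over $\mbK$, and using $(-\mbD)_\mbK D_i = -(-\mbD)_{\mbK+\mbe_i}$, the two resulting sums cancel in pairs after the relabelling $\mbK'=\mbK+\mbe_i$. This gives $\mbE_v(D_iF^i)=0$.

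For the second assertion, I would use a homotopy argument. The absence of syzygies between $(\mbx,\mbz,[v])$ makes the rescaling $v\mapsto tv$ well defined on the jet coordinates, so
\[
f(\mbx,\mbz,[v]) - f(\mbx,\mbz,[0]) = \int_0^1 v_\mbJ\,\frac{\p f}{\p v_\mbJ}\bigg|_{[tv]}\upd t.
\]
The key intermediate step is to verify that $D_i$ commutes with the substitution $v\mapsto tv$; this again rests on $D_iz^a$ being independent of the $v$-jet, so that the factor of $t$ picked up from $\p_{v_\mbJ}$ matches the factor of $t$ carried by the rescaled $v_{\mbJ i}$. Consequently, the standard integration-by-parts identity $v_\mbJ g = v\,(-\mbD)_\mbJ g + D_iH^i[g]$ remains valid with $g=(\p f/\p v_\mbJ)|_{[tv]}$, and summing over $\mbJ$ yields
\[
v_\mbJ\,\frac{\p f}{\p v_\mbJ}\bigg|_{[tv]} = v\,\mbE_v(f)\big|_{[tv]} + D_iH^i(\mbx,\mbz,[v],t).
\]
Since the hypothesis $\mbE_v(f)=0$ is an identity in $(\mbx,\mbz,[v])$, it persists after substitution, killing the first term. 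Pulling $D_i$ through the $t$-integral then gives $f-f|_{[0]} = D_i\!\int_0^1 H^i\,\upd t$, a divergence.

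The main obstacle is the clean verification that the hypothesis on $D_iz^a$ is exactly what rescues the standard Euler--Lagrange calculus in the presence of the subsidiary variables: once the commutation of $\mbD_\mbJ$ with both the $v_\mbK$-partials and the rescaling $[v]\mapsto[tv]$ is established, everything else reduces to routine jet calculus, with the no-syzygy hypothesis ensuring that the $v_\mbJ$ can be treated as independent coordinates for the purposes of forming partial derivatives.
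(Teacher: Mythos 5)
Your proposal is correct and follows essentially the same route as the paper's proof: the first assertion via the commutation of $\p/\p v_\mbJ$ with $D_i$ (the hypothesis on $D_iz^a$ killing the extra chain-rule term, with pairwise cancellation under $(-\mbD)_\mbJ$), and the second via the standard homotopy formula combined with integration by parts and the vanishing of $\mbE_v(f)$ at $[tv]$.
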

\begin{proof}
	Suppose that $D_iz^a$ is independent of $[v]$, for all $z^a$ occurring in $F^i$. Then
	\begin{align*}
		\mbE_v\!\left(D_iF^i(\mbx,\mbz,[v])\right)
		&=(-\mbD)_\mbJ\left\{D_i\!\left(\frac{\p F^i}{\p v_\mbJ}\right)\!+\frac{\p D_iz^a}{\p v_\mbJ} \,\frac{\p F^i}{\p z^a}+\,\frac{\p v_{\mbK i}}{\p v_\mbJ} \,\frac{\p F^i}{\p v_\mbK}\right\}\\
		&=(-\mbD)_\mbJ\left\{D_i\!\left(\frac{\p F^i}{\p v_\mbJ}\right)+\,\frac{\p v_{\mbK i}}{\p v_\mbJ} \,\frac{\p F^i}{\p v_\mbK}\right\}\\
		&=0.
	\end{align*}
	(Syzygies invalidate the conclusion `$=0$'.) Now suppose that $\mbE_{v}\big\{f(\mbx,\mbz,[v])\big\}=0$. Then
	\begin{align*}
		f(\mbx,\mbz,[v])-f(\mbx,\mbz,[0])&=\int_{t=0}^1\frac{\upd f(\mbx,\mbz,[tv])}{\upd t}-v\left(\mbE_{v}\big\{f(\mbx,\mbz,[v])\big\} \right)\!\big|_{v\mapsto tv}\,\upd t\\
		&=\int_{t=0}^1\left\{v_\mbJ\,\frac{\p f(\mbx,\mbz,[v])}{\p v_\mbJ}-v(-\mbD)_\mbJ\,\frac{\p f(\mbx,\mbz,[v])}{\p v_\mbJ}\right\}\bigg|_{v\mapsto tv}\frac{\upd t}{t}\,.
	\end{align*}
	The expression in braces is a divergence, so $f(\mbx,\mbz,[v])-f(\mbx,\mbz,[0])$ is a divergence.
\end{proof}

For the rest of this subsection, let $\mcA=0$ be the system of Euler--Lagrange equations,
\beq\label{eleq}
\mcA_\alpha:={\mathbf{E}}_{u^\alpha}\{L(\mbx,[\mbu])\}=0,\qquad \alpha =1,\dots,M,
\eeq
arising from from the variational problem $\delta\msL=0$, where
\[
\msL=\int L(\mbx,[\mbu]) \upd\,\mbx
\]
is the action. We begin with a brief outline of variational symmetries; for more details, see Olver \cite{olver}.

A generalized symmetry is \textit{variational} if, for all sufficiently small $|\varepsilon|$, the mapping
\[
(\mbx,\,\uaJ)\longmapsto (\mbx,\,\uaJ+\varepsilon \mbD_{\mbJ} Q^\alpha)
\]
leaves the action unchanged, to first order in $\varepsilon$. Consequently, the Euler--Lagrange equations are unchanged, to first order. As
\[
L\mapsto L+\varepsilon X(L) + O(\varepsilon^2),
\]
it follows that $X(L)$ belongs to the kernel of all $\mbE_{u^\alpha}$, so it is a divergence. Therefore, $\mbQ$ is a characteristic of generalized variational symmetries if and only if there exist functions $P^i(\mbx,[\mbu])$ such that
\beq\label{vargen}
X(L) = \mbD_{\mbJ}Q^{\alpha}\, \frac{\partial L}{\p\, \mbD_{\mbJ}u^{\alpha}} =D_i P^i.
\eeq
This can be integrated by parts to obtain a conservation law in characteristic form,
\beq\label{varcl}
\qquad Q^{\alpha}{\mathbf{E}}_{\alpha}(L) = D_i F^i(\mbx,[\mbu]),
\eeq
where $F^i-P^i$ is the divergence arising from the integration.
Furthermore, given any conservation law in characteristic form, the argument above can be reversed to go from \eqref{varcl} to \eqref{vargen} (with $\mcQ^\alpha$ replacing $Q^\alpha$). This leads to Noether's (First) Theorem from her 1918 paper.

\begin{theorem}[Noether $1$]
	A function $\mbQ$ is a generalized variational symmetry characteristic for a system of Euler--Lagrange equations if and only if it is a multiplier of a conservation law.
\end{theorem}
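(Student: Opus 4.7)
The plan is to deduce both directions of the equivalence from a single off-shell algebraic identity linking $X(L)$ and $Q^\alpha \mcA_\alpha$, obtained by integration by parts. This identity is implicit in the paragraph preceding the theorem, between \eqref{vargen} and \eqref{varcl}, but is used there only in one direction under the variational hypothesis. I would begin by recording it as an unconditional equality on the jet space.

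Since $L(\mbx,[\mbu])$ depends on only finitely many $u^\alpha_\mbJ$, the expression $X(L)=\mbD_\mbJ Q^\alpha\, \p L/\p u^\alpha_\mbJ$ is a finite sum. Applying the Leibniz rule for $D_i$ exactly $|\mbJ|$ times shifts every total derivative off $Q^\alpha$ and onto $\p L/\p u^\alpha_\mbJ$, giving
\[
\mbD_\mbJ Q^\alpha \,\frac{\p L}{\p u^\alpha_\mbJ} \,=\, Q^\alpha (-\mbD)_\mbJ\frac{\p L}{\p u^\alpha_\mbJ} \,+\, D_i R^i_{\alpha,\mbJ}
\]
for explicit $R^i_{\alpha,\mbJ}(\mbx,[\mbu])$. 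Summing over $(\alpha,\mbJ)$ and using $\mcA_\alpha = \mbE_{u^\alpha}\{L\} = (-\mbD)_\mbJ(\p L/\p u^\alpha_\mbJ)$ yields the identity
\[
X(L) \,=\, Q^\alpha \mcA_\alpha + D_i R^i, \qquad R^i := \sum_{\alpha,\mbJ} R^i_{\alpha,\mbJ},
\]
which holds for every $\mbQ$ without restriction to $[\mcA = 0]$.

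Both implications now follow at once. If $\mbQ$ is a variational symmetry characteristic, then by definition $X(L)=D_iP^i$ for some $P^i(\mbx,[\mbu])$; subtracting this from the identity gives $Q^\alpha\mcA_\alpha = D_i(P^i-R^i)$, so $\mbQ$ is a multiplier with fluxes $F^i=P^i-R^i$. Conversely, if $\mbQ$ is a multiplier with $Q^\alpha\mcA_\alpha=D_iF^i$, the identity gives $X(L) = D_i(F^i+R^i)$, a divergence, so $\mbQ$ is the characteristic of a generalized variational symmetry with $P^i = F^i+R^i$.

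The only real work is establishing the identity, which is purely formal bookkeeping and does not need orthonomicity, the representation \eqref{clA}, Lemma \ref{princel}, or any property of the solution set. The subtlety one must not elide is its off-shell character: an identity valid only when $[\mcA=0]$ would be inadequate for the backward direction, since variationality requires $X(L)$ to be a divergence on the whole jet space, not merely modulo the equations. Once the off-shell identity is in hand, Noether's theorem is immediate in both directions.
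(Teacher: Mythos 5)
Your proof is correct and is essentially the paper's own argument: the paper likewise passes between \eqref{vargen} and \eqref{varcl} by integrating $\mbD_\mbJ Q^\alpha\,\p L/\p u^\alpha_\mbJ$ by parts, which is precisely your off-shell identity $X(L)=Q^\alpha\mbE_{u^\alpha}(L)+D_iR^i$, and then reads off both implications. Your explicit emphasis that the identity holds on the whole jet space (not merely when $[\mcA=0]$) is a worthwhile clarification of a point the paper leaves implicit, but it is not a different method.
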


Noether's Second Theorem deals with families of characteristics that depend on a completely arbitrary function $\gtt(\mbx)$ and its derivatives.

\begin{theorem}[Noether $2$]
	An Euler--Lagrange system, $\mbE_{u^\alpha}(L)=0$, has a family of variational symmetry characteristics $\mbQ(\mbx,[\mbu],[\gtt])$ that are linear homogeneous in an arbitrary function $\gtt(\mbx)$ if and only if there is a syzygy between the components $\mbE_{u^\alpha}(L)$, that is, if and only if there exist differential operators $\mcD^\alpha$ (independent of $\gtt$) such that
	\beq\label{noesyz}
	\mcD^{\alpha} \mbE_{u^\alpha}(L)= 0.
	\eeq
\end{theorem}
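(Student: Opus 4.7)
\noindent The plan is to combine Noether $1$ with integration by parts (via the formal adjoint) and Lemma \ref{princel}, then reverse the argument for the converse.

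For the forward direction, suppose a family $\mbQ(\mbx,[\mbu],[\gtt])$ of variational symmetry characteristics exists. Linear homogeneity in $\gtt$ lets me write $Q^\alpha = \mcD^\alpha\gtt$, where each $\mcD^\alpha$ is a linear differential operator (acting on functions of $\mbx$) whose coefficients depend on $(\mbx,[\mbu])$ only. By Noether $1$, $\mbQ$ is a multiplier of a conservation law, so
\[
(\mcD^\alpha\gtt)\,\mbE_{u^\alpha}(L) = D_i F^i(\mbx,[\mbu],[\gtt])
\]
holds identically in $(\mbx,[\mbu],[\gtt])$. Integrating by parts to move $\mcD^\alpha$ off $\gtt$ and onto $\mbE_{u^\alpha}(L)$ yields
\[
\gtt\,(\mcD^\alpha)^\bdag\mbE_{u^\alpha}(L) = D_i G^i(\mbx,[\mbu],[\gtt])
\]
for some $G^i$. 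I now apply the Euler--Lagrange operator $\mbE_\gtt$ to both sides. The right-hand side vanishes by Lemma \ref{princel}, where the subsidiary set $\mbz$ is taken to be the derivatives of $\mbu$: their total derivatives remain among the derivatives of $\mbu$ and so are independent of $[\gtt]$. On the left, $(\mcD^\alpha)^\bdag\mbE_{u^\alpha}(L)$ is independent of $[\gtt]$, so $\mbE_\gtt\{\gtt\cdot(\mcD^\alpha)^\bdag\mbE_{u^\alpha}(L)\} = (\mcD^\alpha)^\bdag\mbE_{u^\alpha}(L)$. Therefore $(\mcD^\alpha)^\bdag\mbE_{u^\alpha}(L) = 0$ as a differential identity, which is a syzygy of the required form (after relabelling $(\mcD^\alpha)^\bdag$ as $\mcD^\alpha$).

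For the converse, given a syzygy $\mcD^\alpha\mbE_{u^\alpha}(L) = 0$, I multiply by an arbitrary $\gtt(\mbx)$ and integrate by parts:
\[
0 = \gtt\,\mcD^\alpha\mbE_{u^\alpha}(L) = \left((\mcD^\alpha)^\bdag\gtt\right)\mbE_{u^\alpha}(L) + D_i H^i.
\]
Thus $Q^\alpha := (\mcD^\alpha)^\bdag\gtt$, which is linear homogeneous in $\gtt$, is a multiplier of the conservation law $-D_i H^i$, and by Noether $1$ it is the characteristic of a generalized variational symmetry.

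The main obstacle is justifying $\mbE_\gtt\{D_i G^i\} = 0$ when the divergence depends jointly on $[\mbu]$ and $[\gtt]$; this is exactly what Lemma \ref{princel} provides, once one checks that its hypotheses are met in the present setting (the coordinates $(\mbx,[\mbu],[\gtt])$ are syzygy-free, and $D_i\uaJ \in [\mbu]$ introduces no $[\gtt]$ dependence). A secondary point to verify is that $\mcD^\alpha$ is genuinely $[\gtt]$-free as an operator, so that the formal adjoint $(\mcD^\alpha)^\bdag$ is well-defined and the identity $\mbE_\gtt\{\gtt F\} = F$ holds for $F$ independent of $[\gtt]$. After those points are secured, the rest is routine adjoint bookkeeping.
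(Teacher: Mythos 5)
Your proof is correct and, for the direction the paper actually spells out (syzygy $\Rightarrow$ family of characteristics, by multiplying the syzygy by $\gtt$ and integrating by parts to reach the form \eqref{varcl} with a trivial right-hand side), it coincides exactly with the paper's construction. The forward direction, which the paper leaves implicit, is supplied by the standard argument --- apply $\mbE_{\gtt}$ after moving the operator onto $\mbE_{u^\alpha}(L)$, using Lemma \ref{princel} with $v=\gtt$ and $\mbz=[\mbu]$ to kill the divergence --- and your checks of the lemma's hypotheses are the right ones.
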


Specifically, the family of characteristics is obtained from this syzygy by multiplying \eqref{noesyz} by $\gtt$, then integrating by parts to obtain an expression of the form \eqref{varcl}, whose right-hand side is (automatically) a trivial conservation law.
Recently, Olver has proved that the arbitrary function $\gtt$ may also depend on $[\mbu]$ (see Olver \cite{olvN2} for details). Noether's Second Theorem immediately generalizes to families of characteristics that depend on more than one arbitrary function, each of which corresponds to an independent syzygy.

\section{Conservation laws that depend on functions of $\mbx$}\label{clfnsec}

From here on, we consider a given PDE system $\mcA=0$ that is not necessarily variational.
The set $C_{\:\!\!\mcA}$ of equivalence classes of conservation laws is a vector space. If the dimension of $C_{\:\!\!\mcA}$ is countable and $\mcC_k$ is a conservation law in the $k^{\text{th}}$ equivalence class, every conservation law is equivalent to $\mcC=c^k\mcC_k$ for some constants $c^k$.  However, many well-known systems have conservation laws in a family $\mcC(\mbx,[\mbu],[\gttb])$ that is linear homogeneous in $\gttb=(\gtt^1,\gtt^2,\dots)$, where each $\gtt^r$ is a smooth function of $\mbx$ only. The functions $\gtt^r$ may be free (entirely arbitrary), or arbitrary subject to some linear differential constraints,
\beq\label{constr}
\msD^l_r\gtt^r=0,\qquad l=1,2,\dots\, .
\eeq
Here each $\msD^l_r$ is a linear differential operator whose coefficients depend on $\mbx$ only; total derivatives $D_i$ are used, even though $\gtt^r$ does not depend on $[\mbu]$. The set of constraints \eqref{constr} is \textit{complete} if:
\begin{itemize}
	\item it fully specifies $\gttb$, so that each $\gtt^r$ is an arbitrary function of $\mbx$ subject only to \eqref{constr};
	\item it has no additional integrability conditions.
\end{itemize}

\noindent\textbf{Note}:\,\ If the dimension of $C_{\:\!\!\mcA}$ is countable, every conservation law $\mcC$ is a member of the family $\mcC=\gtt^k\mcC_k(\mbx,[\mbu])$ that is subject to the constraints $D_i\gtt^k=0$ for all $i$ and $k$.

Multipliers that depend on functions $\gttb$ can be used to obtain syzygies and/or simplify the conservation laws of the given system of PDEs.

\begin{theorem}\label{bridge1}
	Suppose that $\mcA=0$ has a family of conservation laws $\mcC$ that is linear homogeneous in $\gttb$, which is subject to the complete set of constraints \eqref{constr}. Then there exists $\mblam=(\lambda_1(\mbx,[\mbu]),\lambda_2(\mbx,[\mbu]),\dots)$ such that
	\beq\label{detconstr}
	(-\mbD)_\mbJ\left(\frac{\p\, \mcC(\mbx,[\mbu],[\gttb])}{\p \gtt^r_{\mbJ}}\right)+\big(\msD^l_r\big)^{\!\bdag}\lambda_l=0,\qquad r=1,2,\dots.
	\eeq
	For each set of differential operators $\msD^r$ such that $\msD^r(\msD^l_r)^{\!\bdag}\lambda_l=0$, there is a corresponding syzygy,
	\beq\label{syzelim}
	\msD^r(-\mbD)_\mbJ\left(\frac{\p\, \mcC(\mbx,[\mbu],[\gttb])}{\p \gtt^r_{\mbJ}}\right)=0.
	\eeq
	Provided that not all $\gtt^r$ are free, the family $\mcC$ is equivalent to the family of conservation laws obtained by substituting any solution $\mblam$ of \eqref{detconstr} into
	\beq\label{clcon}
	\mcC_{\mblam}:=\lambda_l\msD^l_r\gtt^r-\gtt^r\big(\msD^l_r\big)^{\!\bdag}\lambda_l.
	\eeq
\end{theorem}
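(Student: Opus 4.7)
The strategy is analogous to the derivation of Noether's Second Theorem: treat the constraints $\msD^l_r\gtt^r=0$ as Lagrange multipliers to convert ``$\mcC$ is a conservation law for every $\gttb$ satisfying (3.2)'' into an identity in the ambient $(\mbx,[\mbu],[\gttb])$-jet space, then apply the Euler--Lagrange operator in $\gttb$ to extract (3.4). The syzygies (3.5) and the equivalence with $\mcC_{\mblam}$ then follow from (3.4) by short calculations.

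The delicate first step is to turn the conservation-law hypothesis into such an ambient-jet identity. Since $\mcC-D_iF^i$ vanishes on the submanifold cut out by the prolongation of the constraints, and since (3.2) is complete---no integrability conditions appear---a differential-algebraic Nullstellensatz applies: any linear-in-$\gttb$ expression vanishing on this submanifold lies in the $(\mbx,[\mbu])$-linear span of the $\mbD_{\mbI}(\msD^l_r\gtt^r)$. Hence there exist $\mu_{l,\mbI}(\mbx,[\mbu])$ and $F^i(\mbx,[\mbu],[\gttb])$ with $\mcC = D_iF^i + \mu_{l,\mbI}\,\mbD_{\mbI}(\msD^l_s\gtt^s)$ identically. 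Integrating each $\mbD_{\mbI}$ by parts and absorbing the boundary divergences into $F^i$ rewrites this as
\[
\mcC = -\lambda_l\msD^l_r\gtt^r + D_i\widehat F^i,\qquad \lambda_l := -\sum_{\mbI}(-\mbD)_{\mbI}\mu_{l,\mbI},
\]
with $\lambda_l$ depending only on $(\mbx,[\mbu])$.

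Applying $\mbE_{\gtt^r}$ to this identity produces (3.4). Indeed, linearity of $\mcC$ in $\gttb$ gives $\mbE_{\gtt^r}(\mcC) = (-\mbD)_{\mbJ}(\p\mcC/\p\gtt^r_{\mbJ})$; the adjoint formula gives $\mbE_{\gtt^r}(-\lambda_l\msD^l_s\gtt^s) = -(\msD^l_r)^{\bdag}\lambda_l$; and Lemma~\ref{princel}, applied with $v=\gtt^r$ in the ambient jet space where the $\gtt^s$ carry no syzygies, gives $\mbE_{\gtt^r}(D_i\widehat F^i)=0$. The syzygy (3.5) is then immediate: act on (3.4) with any $\msD^r$ annihilating $(\msD^l_r)^{\bdag}\lambda_l$, and the $\lambda$-term cancels.

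For the equivalence claim, the adjoint identity $\lambda_l\msD^l_r\gtt^r - \gtt^r(\msD^l_r)^{\bdag}\lambda_l = D_iK^i$ shows that $\mcC_{\mblam}$ is identically a divergence. Substituting $(\msD^l_r)^{\bdag}\lambda_l=-E^r$ from (3.4) rewrites $\mcC_{\mblam} = \lambda_l\msD^l_r\gtt^r + \gtt^rE^r$, while a separate integration by parts of $\mcC$ in $\gttb$ yields $\mcC = \gtt^rE^r + D_i\bar F^i$; on the constraint set the $\lambda_l\msD^l_r\gtt^r$ term drops out of $\mcC_{\mblam}$, so $\mcC$ and $\mcC_{\mblam}$ differ by the divergence $D_i\bar F^i$, which is absorbed into the trivial equivalence class. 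The final hypothesis that not all $\gtt^r$ are free excludes the degenerate case where every $\lambda_l$ is forced to vanish and $\mcC_{\mblam}$ collapses to zero while $\mcC$ need not. The main obstacle in the argument is the differential-Nullstellensatz invocation in the second paragraph, as it is precisely there that the completeness of (3.2) plays a non-trivial role.
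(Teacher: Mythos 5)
Your overall architecture (Lagrange multipliers for the constraints, the Euler operator $\mbE_{\gtt^r}$, Lemma \ref{princel}, integration by parts in $\gttb$) matches the paper's, and your ideal-membership justification for the existence of $\mblam$ is a reasonable substitute for the paper's bare appeal to Lagrange multipliers. However, there is a genuine gap in the equivalence part. You conclude that $\mcC-\mcC_{\mblam}=D_i\bar F^i$ is trivial because it is a divergence. That inference is false: every conservation law is a divergence, and triviality requires either that the divergence vanish identically or that its components vanish on solutions, $\bar F^i|_0=0$ (cf.\ \eqref{cltriv}). Your $\bar F^i$ are the boundary terms from integrating $\gtt^r_{\mbJ}\,\p\mcC/\p\gtt^r_{\mbJ}$ by parts, i.e.\ bilinear expressions in $[\gttb]$ and derivatives of $\p\mcC/\p\gtt^r_{\mbJ}$, and the latter do not vanish on solutions in general (in the paper's KP example, $\p\mcC/\p\gtt_x=yu_{xx}+yu^2-xyv$). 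The paper closes exactly this gap by first replacing $\mcC$ by its characteristic form $\mcQ^\mu\mcA_\mu$ and only then integrating by parts: the quantities being differentiated are then $\frac{\p\mcQ^\mu}{\p\gtt^r_{\mbJ}}\,\mcA_\mu$, which carry explicit factors of $[\mcA]$, so the resulting components vanish on solutions and the difference is trivial of the first kind.

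A related symptom is that, because you work with whatever representative $\mcC=D_iF^i$ you started from, your opening step can collapse. If $\mcC$ is presented as an identical divergence in the ambient jet space (as in the KP example), your Nullstellensatz decomposition yields $\mu_{l,\mbI}=0$, hence $\mblam=0$ and $\mcC_{\mblam}=0$, and your argument would then ``prove'' that $\mcC$ is trivial. It is the representation \eqref{clA} and the passage to characteristic form in \eqref{chardet} that produce a useful, generally nonzero $\mblam$ and make both halves of the equivalence claim work; your proposal never invokes either. Your derivation of \eqref{detconstr} from the decomposition, and of the syzygies \eqref{syzelim} by applying $\msD^r$, is otherwise in line with the paper.
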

\begin{proof}
	Suppose that $\mcA=0$ has such a family of conservation laws. If any of the functions $\gtt^r$ are free, Lemma \ref{princel} applies with $v=\gtt^r$, giving
	\beq\label{enocon}
	\mbE_{\gtt^r}\!\left\{\mcC(\mbx,[\mbu],[\gttb])\right\}=0.
	\eeq
	For all other $r$, use Lagrange multipliers, $\lambda_l$, to enforce the constraints, giving
	\beq\label{econstr}
	\mbE_{\gtt^r}\!\left\{\mcC(\mbx,[\mbu],[\gttb])+\lambda_l\:\!\msD^l_s\gtt^s\right\}=0.
	\eeq

	This amounts to \eqref{detconstr}, from which the syzygies \eqref{syzelim} are constructed. As $\lambda_l\msD_s^l=0$ when $\gtt^s$ is free, \eqref{enocon} can be treated as a special case of the general construction. If not all $\msD^l_r$ are zero, there exists a solution $\mblam(\mbx,[\mbu])$ of \eqref{detconstr}.
	
	The conservation law $\mcC$ differs from its characteristic form $\mcQ^\mu\mcA_\mu$ by a divergence. Lagrange multipliers enable the functions $\gtt^r$ to be treated as if they were unrelated so, by Lemma \ref{princel},
	\beq\label{chardet}
	\mbE_{\gtt^r}\{\mcC+\lambda_l\:\!\msD^l_s\gtt^s\}=\mbE_{\gtt^r}\{\mcQ^\mu\mcA_\mu+\lambda_l\:\!\msD^l_s\gtt^s\}=(-\mbD)_\mbJ\left(\frac{\p\, \mcQ^\mu(\mbx,[\mbu],[\gttb])}{\p \gtt^r_{\mbJ}}\,\mcA_\mu\right)+\big(\msD^l_r\big)^{\!\bdag}\lambda_l.
	\eeq
	As $\mcQ$ is linear homogeneous in $\gttb$,
	\[
	\mcQ^\mu\mcA_\mu-\mcC_{\mblam}=\gtt^r_{\mbJ}\, \frac{\p \mcQ^\mu}{\p \gtt^r_{\mbJ}}\,\mcA_\mu-\gtt^r(-\mbD)_\mbJ\left(\frac{\p \mcQ^\mu}{\p \gtt^r_{\mbJ}}\,\mcA_\mu\right),
	\]
	for all $\gttb$ that satisfy the constraints. This is a trivial conservation law; thus, so is $\mcC-\mcC_{\mblam}$.
	
	Finally, suppose that $\mblam^{1}$ and $\mblam^{2}$ both satisfy \eqref{detconstr}, and let $\mblam=\mblam^{1}-\mblam^{2}$. Then $\big(\msD^l_r\big)^{\!\bdag}\lambda_l=0$, and so $\mcC_{\mblam}=\mcC_{\mblam^{1}}-\mcC_{\mblam^{2}}$ is zero for all $\gttb$ that satisfy the given constraints. Therefore, every solution of \eqref{detconstr} yields a family of conservation laws \eqref{clcon} that is equivalent to $\mcC$.
\end{proof}

It is well-known that if $\ell u=0$ is a linear homogeneous PDE (where $\ell$ is a linear differential operator) and $\gtt(\mbx)$ is a solution of $\ell^{\bdag}\gtt=0$ then
\beq\label{linCL}
\mcC=\gtt \ell u-u\ell^{\bdag}\gtt
\eeq
is a conservation law. This arises from Theorem \ref{bridge1} by setting $\mcA_1=\ell u$ and $\mcQ^1=\gtt^1=\gtt(\mbx)$; the condition $\mbE_u(\mcQ^1\mcA_1)=0$ gives the constraint $\ell^{\bdag}\gtt=0$, so $\msD_1^1=\ell^{\bdag}$ and hence \eqref{detconstr} has the solution $\lambda_1=-u$. Theorem \ref{bridge1} extends this result to nonlinear systems of PDEs.

As Theorem \ref{bridge1} implies, there are systems of PDEs and constraints for which it is possible to eliminate Lagrange multipliers from \eqref{detconstr} and its differential consequences to obtain syzygies.

\bex
In Cartesian coordinates, the Euler equation for a constant-density two-dimensional incompressible potential flow with velocity $\nabla\phi$ and pressure $p$ has the components
\[
\mcA_1=\phi_{xt}+\phi_x\phi_{xx}+\phi_y\phi_{xy}+p_x,\qquad \mcA_2=\phi_{yt}+\phi_x\phi_{xy}+\phi_y\phi_{yy}+p_y,\qquad\mcA_3=\phi_{xx}+\phi_{yy}\,.
\]
This system has an integrability condition\footnote{One orthonomic form of the completed involutive system has leading derivatives $(\phi_{xt},\phi_{yt},\phi_{xx},p_{xx})$, the last of which is the leading term in the integrability condition
\[
0=p_{xx}+p_{yy}+2(\phi_{xy}^2+\phi_{yy}^2)=D_x\mcA_1+D_y\mcA_2+(\phi_{yy}-D_t-\phi_xD_x-\phi_yD_y)\mcA_3-\mcA_3^2.
\]}
that determines the Laplacian of the pressure in terms of $[\phi]$.
All multipliers that are independent of $([\phi],[p])$ are of the form $\mcQ^\mu=\gtt^{\mu}(x,y,t)$, subject to the following constraints:
\[
D_x\gtt^1+D_y\gtt^2=0,\qquad D_x^2\gtt^3+D_y^2\gtt^3=0.
\]
Applying Theorem \ref{bridge1} gives the conditions
\beq\label{potcond}
\mcA_1=D_x\lambda_1,\qquad \mcA_2=D_y\lambda_1,\qquad \mcA_3=-D_x^2\lambda_2-D_y^2\lambda_2,
\eeq
which have a solution $(\lambda_1,\lambda_2)=(H,-\phi)$,
where $H=\phi_t+(\phi_x^2+\phi_y^2)/2+p$ is the Bernoulli function. The corresponding family of conservation laws is $\mcC_{\mblam}=\mcC_1+\mcC_2$, where
\[
\mcC_1=D_x(\gtt^1H)+D_y(\gtt^2H),\qquad \mcC_2=D_x(\gtt^3\phi_x-\gtt^3_x\phi)+D_y(\gtt^3\phi_y-\gtt^3_y\phi).
\]
Note that $\lambda_1$ can be eliminated from \eqref{potcond}, yielding the syzygy
\[
D_x\mcA_2-D_y\mcA_1=0.
\]
The conservation law $\mcC_1$ is trivial, because the solution of the first constraint is $(\gtt^1,\gtt^2)=(\gtt_y,-\gtt_x)$, where the arbitrary function $\gtt(x,y,t)$ can be regarded as the Lagrange multiplier for the syzygy. Consequently,
\[
\mcC_1=D_x\left(D_y(\gtt H)-\gtt\mcA_2\right)+D_y\left(-D_x(\gtt H)+\gtt\mcA_1\right)=D_x\left(-\gtt\mcA_2\right)+D_y\left(\gtt\mcA_1\right).
\]
By contrast, $\mcC_2$ is nontrivial for $\gtt^3\neq 0$; indeed, it is the conservation law \eqref{linCL} for Laplace's equation.
\eex

\begin{corollary}\label{corpart}
	If the constraints on $\gttb$ in Theorem \ref{bridge1} do not involve derivatives with respect to one or more variables $x^i$, the corresponding components $F^i$ in the family of conservation laws \eqref{clcon} are zero.
\end{corollary}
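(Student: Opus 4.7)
The strategy is to carry out the integration by parts that exhibits \eqref{clcon} as a divergence explicitly, and to verify that no $D_i$-flux is generated when none of the operators $\msD^l_r$ involves $D_i$. By construction, each summand $\lambda_l\msD^l_r\gtt^r-\gtt^r(\msD^l_r)^{\bdag}\lambda_l$ defining $\mcC_{\mblam}$ is a divergence (this is how the formal adjoint is characterised); the task is simply to pin down which coordinate directions the flux lives in.

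First I would decompose each operator into monomials, $\msD^l_r=\sum_{\mbK}a^{\mbK,l}_r(\mbx)\,D_{\mbK}$, so that the hypothesis that $\msD^l_r$ involves no $D_i$ translates into $a^{\mbK,l}_r=0$ whenever $k^i>0$. Then I would apply the elementary identity $\lambda D_j h=D_j(\lambda h)-hD_j\lambda$ iteratively to each monomial $a^{\mbK,l}_r\lambda_l D_{\mbK}\gtt^r$, transferring one derivative at a time off $\gtt^r$ onto $a^{\mbK,l}_r\lambda_l$, and doing so \emph{only} in those coordinate directions $j$ for which $k^j>0$. A typical small instance is $\lambda D_1D_2 g-gD_1D_2\lambda=D_1(\lambda D_2 g)-D_2((D_1\lambda)g)$, which contains flux only in the directions in which the monomial actually differentiates $g$. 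The general case yields
\[
a^{\mbK,l}_r\lambda_l\,D_{\mbK}\gtt^r-(-1)^{|\mbK|}\gtt^r D_{\mbK}(a^{\mbK,l}_r\lambda_l)=\sum_{j:\,k^j>0}D_j H^{j}_{\mbK,l,r},
\]
where each $H^{j}_{\mbK,l,r}$ is bilinear in $\lambda_l$, $\gtt^r$ and their derivatives.

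Summing this identity over $l,r,\mbK$ produces a divergence representation $\mcC_{\mblam}=D_jF^j$ with $F^j=\sum_{l,r}\sum_{\mbK:\,k^j>0}H^{j}_{\mbK,l,r}$. Under the hypothesis, every $\mbK$ with $a^{\mbK,l}_r\neq 0$ has $k^i=0$, so the sum defining $F^i$ is empty and $F^i\equiv 0$, which is exactly the claim. There is essentially no obstacle beyond bookkeeping; the only discipline required is to perform the integration by parts direction-by-direction rather than in a symmetrised form, since a symmetrised version would artificially introduce $D_i$-terms that only cancel after summation over $l,r,\mbK$. Equivalently, one can simply observe that under the hypothesis every $\msD^l_r$ is a differential operator in the $N-1$ variables $\{x^j:j\neq i\}$ (its coefficients may still depend on $x^i$, but that is irrelevant to the by-parts machinery), so the divergence it produces lives in $\mathbb{R}^{N-1}$ and therefore has zero $i$-component when viewed in $\mathbb{R}^N$.
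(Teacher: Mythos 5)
Your proof is correct and takes essentially the same approach as the paper's, which disposes of the corollary in one sentence by observing that the total derivative operators in $\big(\msD^l_r\big)^{\bdag}$ contain no $D_{x^i}$, so neither does \eqref{clcon}. Your monomial decomposition and direction-by-direction integration by parts is precisely the bookkeeping that substantiates that observation; no new idea is involved.
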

\begin{proof}
	The total differential operators in $\big(\msD^l_r\big)^{\!\bdag}$ have no $D_{x^i}$ derivatives, so neither does \eqref{clcon}.
\end{proof}

In particular, Corollary \ref{corpart} applies whenever $\gttb$ is an arbitrary function of some (but not all) independent variables\footnote{This was established for quasi-Noether systems by Rosenhaus \& Shankar \cite{rosenshank}, and for scalar PDEs by Popovych \& Bihlo \cite{popbih}}. 
The best-known examples with this property are scalar PDEs with first integrals, such as the Liouville equation and other Darboux integrable scalar hyperbolic PDEs. However, the corollary applies equally to multi-component systems whose multipliers depend one or more arbitrary functions of $N-1$ variables.

\bex
Consider the Liouville-type system $\mcA=0$ defined by
\[
\mcA_1=u_{xt}-e^{2u-v},\qquad \mcA_2=v_{xt}-e^{2v-u}.
\]
This has a family of multipliers that depend on $(u,v,u_x,v_x)$ and $\gtt^1(x,t)$, with components
\[
\mcQ^1=\gtt^1(2u_x-v_x)+D_x\gtt^1,\qquad \mcQ^2=\gtt^1(2v_x-u_x)+D_x\gtt^1,
\]
subject to the constraint $D_t\gtt^1=0$. The corresponding conservation law in characteristic form is
\[
\mcC=D_x\left\{-\,\gtt^1\!\left(e^{2u-v}+e^{2v-u}\right)\right\}+D_t\left\{\gtt^1\!\left(u_x^2-u_xv_x+v_x^2\right)+(D_x\gtt^1)(u_x+v_x)\right\}.
\]
The condition \eqref{detconstr} amounts to
\[
(2u_x-v_x-D_x)\mcA_1+(2v_x-u_x-D_x)\mcA_2-D_t\lambda_1=0,
\]
which has a solution
\[
\lambda_1=u_x^2-u_xv_x+v_x^2-u_{xx}-v_{xx}\,.
\]
So this family of multipliers leads to the family of conservation laws $D_t(\gtt^1 \lambda_1)$ for all $\gtt^1$ that depend on $x$ only; in other words, $\lambda_1$ is a first integral.

Similarly, for the multiplier with components
\[
\mcQ^1=\gtt^2(2u_xv_x-v_x^2+v_{xx})+2(D_x\gtt^2)u_x+D_x^2\gtt^2,\qquad \mcQ^2=\gtt^2(u_x^2-2u_xv_x-u_{xx})-(D_x\gtt^2)u_x\,,
\]
subject to $D_t\gtt^2=0$, the condition \eqref{detconstr} leads to another first integral,
\[
\lambda_2=u_x(v_x^2-u_xv_x+2u_{xx}-v_{xx})-u_{xxx}\,.
\]
Unlike $\lambda_1$, the function $\lambda_2$ is not symmetric under the discrete symmetry $(u,v)\mapsto(v,u)$. This symmetry produces the first integral $D_x\lambda_1-\lambda_2$. The discrete symmetry $(x,t)\mapsto(t,x)$ generates further first integrals from $\lambda_1$ and $\lambda_2$.
\eex
More generally, the constraint need not be of the form $D_i\gtt=0$ for a first integral to arise for a PDE system with two independent variables. The constraint $\msD\gtt=0$ is sufficient, for any first-order differential operator $\msD$ whose coefficients depend on $\mbx$ only.

Another application of the results in this section occurs where symbolic algebra has been used to derive conservation laws (see Poole \& Hereman\cite{pooleherm} and Wolf \cite{wolf}, for example); these are not necessarily in their simplest equivalent form. Once a family of conservation laws that depend on a function is known, Theorem \ref{bridge1} may be used with Corollary \ref{corpart} to find an equivalent (perhaps simpler) form.

\bex
For the KP equation, written as the system
\[
v_x-u_{yy}=0,\qquad v=u_{t}+2uu_{x}+u_{xxx}\,,
\]
symbolic methods yield the following family of conservation laws with $\gtt$ a function of $t$ only:
\[
\mcC=D_t\{\gtt yu\}+D_x\left\{\gtt yu_{xx}+\gtt yu^2-\big(\tfrac{1}{6}\gtt_{t}y^3+\gtt xy\big)v\right\}+D_y\left\{\big(\tfrac{1}{6}\gtt_{t}y^3+\gtt xy\big)u_{y}-\tfrac{1}{2}\gtt_{t}y^2u-\gtt xu\right\}\!.
\]
Applying Theorem \ref{bridge1} with the constraints $D_x\gtt=0$, $D_y\gtt=0$, gives the equivalent family of conservation laws,
\[
\mcC_\lambda=D_x\left\{\gtt\big( yu_{xx}+yu^2+\tfrac{1}{6}y^3v_t-xyv\big)\right\}+D_y\left\{\gtt\big(-\tfrac{1}{6}y^3u_{ty}+xyu_{y}+\tfrac{1}{2}y^2u_{t}-xu\big)\right\},
\]
which is a little simpler than $\mcC$, as it has one fewer term and no $D_t$ component. For higher-order conservation laws with many terms, greater simplification can occur.
\eex

\medskip
\noindent\textbf{Note.}\ When the vector space $C_\mcA$ of equivalence classes of conservation laws has countable dimension, with a basis $\{\mcC_k=D_iF_k^i(\mbx,[\mbu]): k=1,2,\dots,\}$), one can represent the set of all conservation laws (up to equivalence) by
\[
\mcC=\gtt^k\mcC_k,\quad\text{subject to}\quad D_i\gtt^k=0.
\]
With a slight variation in notation, let $\lambda^i_k$ be the Lagrange multiplier corresponding to the constraint $D_i\gtt^k=0$. Then \eqref{detconstr} amounts to
\[
\mbE_{\gtt^j}(\mcC)=D_i\lambda^i_j\,,
\]
which has the solution $\lambda^i_j=F^i_j(\mbx,[\mbu])$; the resulting conservation law is $\mcC_{\mblam}=\mcC$. So the approach taken above applies equally whether $C_\mcA$ is finite- or infinite-dimensional.

\section{Hodograph transformations}

This section examines what happens when a system of PDEs has a family of conservation laws that depend on arbitrary functions which involve dependent variables. We restrict attention to systems for which there exists a hodograph transformation that gives all such variables the role of independent variables. Let $\widetilde{x}^i(\mbx,\mbu),\ i=1,\dots N$ be the new independent variables after such a transformation, and let $\widetilde{u}^\alpha(\mbx,\mbu),\ \alpha=1,\dots,M$ be the new dependent variables. For the transformation to be valid, it is necessary that the Jacobian determinant, $\mathcal{J}=\text{det}(D_j \widetilde{x}^i)$, is non-zero. Let $\widetilde{D}_i$ denote the total derivative with respect to $\widetilde{x}^i$ in the new variables. Then the change of variables rule for a total divergence gives the following useful result.

\begin{lemma}
Let $C=D_i(F^i)$ be a conservation law for a given PDE system. Then $\widetilde{C}=\mathcal{J}^{-1}C$ is a conservation law for the same system in the hodograph-transformed variables, so there exist functions $\widetilde{F}^i$ such that $\widetilde{C}=\widetilde{D}_i(\widetilde{F}^i)$.
\end{lemma}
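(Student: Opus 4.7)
The plan is to construct $\widetilde{F}^i$ explicitly via a Piola-type pushforward of the flux $F^i$. Set $M^i_j := D_j \widetilde{x}^i$, so that $M=(M^i_j)$ is the transition Jacobian and $\mathcal{J}=\det M$. The chain rule for total derivatives in jet space gives $D_j = M^i_j \widetilde{D}_i$ acting on any scalar function expressible in either coordinate system, and inverting (legitimate because $\mathcal{J}\neq 0$) yields $\widetilde{D}_i = (M^{-1})^j_i D_j$. My ansatz is
\[
\widetilde{F}^i := \mathcal{J}^{-1} M^i_j F^j,
\]
which is the usual contravariant pushforward of a vector field weighted by the Piola factor $\mathcal{J}^{-1}$; equivalently, it is what one reads off after pulling back the closed $(N-1)$-form $\sum_i (-1)^{i-1} F^i \, dx^1 \wedge \cdots \wedge \widehat{dx^i} \wedge \cdots \wedge dx^N$ to the new coordinates.

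The heart of the proof is the jet-space Piola identity,
\[
D_k\!\left[\mathcal{J}(M^{-1})^k_i\right] = 0 \qquad (i=1,\dots,N),
\]
which I would establish by expanding $\mathcal{J}(M^{-1})^k_i$ as a signed minor of $M$ (a polynomial in the entries $D_j \widetilde{x}^\ell$), applying $D_k$ via Leibniz, and observing that the resulting alternating sum of products $D_k D_j \widetilde{x}^\ell$ cancels in pairs because total derivatives commute. Granted this, compute
\[
\mathcal{J}\, \widetilde{D}_i \widetilde{F}^i = \mathcal{J}(M^{-1})^k_i D_k \widetilde{F}^i = D_k\!\left[\mathcal{J}(M^{-1})^k_i \widetilde{F}^i\right] - \widetilde{F}^i D_k\!\left[\mathcal{J}(M^{-1})^k_i\right].
\]
The Piola identity kills the second term; in the first, $\mathcal{J}(M^{-1})^k_i \widetilde{F}^i = (M^{-1})^k_i M^i_j F^j = \delta^k_j F^j = F^k$, so $\mathcal{J}\,\widetilde{D}_i\widetilde{F}^i = D_k F^k = C$, and dividing by $\mathcal{J}$ gives $\widetilde{D}_i\widetilde{F}^i = \mathcal{J}^{-1} C = \widetilde{C}$, as required.

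The only non-routine step is the jet-space Piola identity. Classically, when $\widetilde{x}$ depends only on $\mbx$, this follows from the equality of mixed partials $\partial_k\partial_j\widetilde{x}^\ell=\partial_j\partial_k\widetilde{x}^\ell$. In the present setting $\widetilde{x}$ also depends on $\mbu$, so one must run the classical combinatorial argument with the total derivatives $D_j$ in place of the $\partial_j$; the substitution is harmless because total derivatives commute as vector fields on jet space. No other step presents difficulty.
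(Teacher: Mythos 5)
Your proof is correct and follows essentially the same route as the paper, which simply invokes ``the change of variables rule for a total divergence'' without further detail; your explicit flux $\widetilde{F}^i=\mathcal{J}^{-1}M^i_jF^j$ together with the jet-space Piola identity $D_k\bigl[\mathcal{J}(M^{-1})^k_i\bigr]=0$ (valid here because total derivatives commute) is precisely the content of that rule. The only point you leave implicit is the trivial observation that $\widetilde{C}=\mathcal{J}^{-1}C$ still vanishes on solutions since $\mathcal{J}\neq 0$.
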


Consequently, the results of Section \ref{clfnsec} can be applied immediately to the transformed PDE system.

\bex
Consider the time-dependent problem of flow by mean curvature via its level-set formulation,
\beq\label{N}
|\nabla u| \nabla\cdot \left(\frac{\nabla u}{|\nabla u|} \right)-u_t=0.
\eeq
We will work in two spatial dimensions, though the following conservation-law reduction generalizes readily to higher dimensions and to other forms of interfacial dynamics. The normal velocity of each level set of $u(x,y,t)$ is given by the negative of its curvature (so that closed curves disappear in finite time). This geometrical content of (\ref{N}), whereby there is no coupling between different level sets, is reflected in its having three obvious families of symmetries,
\beq\label{N+1}
u\longmapsto U(u),\quad U'(u)\neq 0,\qquad x\longmapsto x + X(u),\qquad y \longmapsto y + Y(u),
\eeq
each involving an arbitrary function of $u$. Equation \eqref{N} admits every multiplier of the form $\mcQ=\gtt(u)$; in regions where $u_x\neq 0$, the resulting conservation law is
\beq\label{redtdmc}
D_x\{-\gtt(u)u_y\tan^{-1}(u_y/u_x)\}+D_y\{\gtt(u)u_x\tan^{-1}(u_y/u_x)\}+D_t\left\{-\int\gtt(u)\,\upd  u\right\}=0.
\eeq
Proceeding as above in applying a hodograph transformation with $u$ as an independent variable and $x=x(u,y,t)$ as a dependent variable, (\ref{redtdmc}) reduces to the product of $\gtt(u)$ and the conservation law
\beq
D_y\left\{-\tan^{-1}(x_y)\right\}+D_t\{x\}=0.
\eeq
A similar reduction using $y$ as a dependent variable applies in regions where $u_y\neq 0$. (One could also use $t$ a dependent variable, but the resulting conservation law is not much simpler than \eqref{redtdmc}.)

So conservation-law reduction simplifies the time-dependent two-dimensional mean curvature flow to the nonlinear filtration equation
\beq\label{N+2}
x_t=\frac{x_{yy}}{1+x_y^2}\,;
\eeq
see Ibragimov \cite{ibrag} for symmetries and some exact solutions of \eqref{N+2}. The five Lie point symmetries in Ibragimov \cite{ibrag} amount to translations in $x, y$ and $t$, a scaling, and rotations in the $(x,y)$-plane. Each of these corresponds to a family of symmetries of \eqref{N} that is obtained by replacing each group parameter by an arbitrary function of $u$. For instance, the scaling invariance of (\ref{N+2}) amounts to the family of symmetries
\[
x \longmapsto h(u)x,\qquad y \longmapsto h(u)y,\qquad t\longmapsto h^{2}(u)t,
\]
of (\ref{N}) for arbitrary nonzero $h(u)$, which is perhaps not obvious \emph{a priori} from \eqref{N}\footnote{The symmetries of \eqref{N} are most evident when the equation is written in its Schwarz-function form (see King \cite{king2000}).}. The first family of symmetries in \eqref{N+1} becomes trivial in the reduced equation, as $u$ appears only parametrically in (\ref{N+2}) (so that the dimensionality is lowered).

Equation (\ref{N+2}) is of course familiar in the interfacial dynamics context: parametrizing the level set of interest in the form $x=f(y,t)$ by setting $u=x-f(y,t)$ leads at once to (\ref{N+2}) with $x$ replaced by $-f$; the reduction to (\ref{N+2}) is in this sense obvious. 

Among the many natural generalizations of (\ref{N}), here we mention only the anisotropic case
\[
u_t=\Phi'\left(\tan^{-1}\left(u_y/u_x\right)\right)|\nabla u| \nabla. \left(\frac{\nabla u}{|\nabla u|} \right),
\]
that similarly reduces to
\[
D_y\left\{\Phi(-\tan^{-1}(x_y))\right\}+D_t(x)=0.
\]
\eex

\section{Multiple reduction using conservation laws and symmetries}

For clarity, we now consider scalar PDEs $\mcA=0$ with two independent variables, $(x,t)$.
Suppose that a given PDE has a family of conservation laws whose multipliers depend linearly on an arbitrary function $\gtt(x,t)$ that is subject to a constraint of the form
\[
a(x,t)D_x\gtt+b(x,t)D_t\gtt=0.
\]   
Using the method of characteristics to change variables if necessary, suppose without loss of generality that $D_x\gtt=0$. Corollary \ref{corpart} implies that (up to equivalence) the corresponding family of conservation laws \eqref{clcon} is of the form
\[
\mcC:=D_x\{\gtt(t)\lambda(x,t,[u])\}=0.
\]
Consequently, $\lambda(x,t,[u])$ is a first integral and so the PDE reduces to
\begin{equation}\label{reduced}
\lambda(x,t,[u])=f(t),
\end{equation}
where $f$ is arbitrary. Symmetries of $\mcA=0$ map the set of solutions to itself, so they are equivalence transformations of \eqref{reduced}, that is, they may change $f$. Thus, a known Lie group of symmetries can be used to partition the reduced PDE \eqref{reduced} into equivalence classes. Commonly, the symmetry group depends on at least one arbitrary function, $h(t)$. In this case, typically, the number of equivalence classes is small and each class has a member with $f$ constant. At the other extreme, some nontrivial symmetries of $\mcA=0$ may become trivial symmetries of the reduced equation \eqref{reduced}; such symmetries do not simplify \eqref{reduced}.

\bex\label{expot}

The type of reduction described above has a familiar application, namely, the transformation of an evolutionary PDE in conservation form,
\beq\label{PDEcons}
u_t=D_xF(x,t,[u])
\eeq
to its potential form. To see this, substitute $u=w_x$ into \eqref{PDEcons} to get
\beq\label{PDEraised}
\mcA:=-w_{xt}+D_xF(x,t,[w_x])=0.
\eeq
This PDE has a family of multipliers $\mcQ=\gtt(t)$, which yields the conservation law
\beq
D_x\{-w_{t}+F(x,t,[w_x])\}=0.
\eeq
Thus, there is a first integral,
\beq\label{PDEfi}
-w_{t}+F(x,t,[w_x])=f(t).
\eeq
The PDE \eqref{PDEraised} also has the family of point symmetries $w\mapsto w+h(t)$, which map \eqref{PDEfi} to
\beq
-w_{t}+F(x,t,[w_x])=f(t)+h'(t).
\eeq
By choosing $h$ appropriately, it is clear that there is only one equivalence class, which contains the potential form of \eqref{PDEcons}:
\beq\label{PDEpot}
w_{t}=F(x,t,[w_x]).
\eeq
So the well-known transformation from \eqref{PDEcons} to \eqref{PDEpot} works because there are appropriate families of multipliers and symmetries.
\eex

Suppose that a member of an equivalence class is invariant under a Lie group of nontrivial point symmetries of \eqref{reduced} that depend on one or more arbitrary functions of one variable (whether or not these are symmetries of $\mcA=0$). Then group splitting can be used to reduce \eqref{reduced} further. If the further-reduced PDE can be solved, all solutions in the relevant equivalence class can be reconstructed.

If $\mcA=0$ is an Euler--Lagrange equation, the multiplier depending on $\gtt(t)$ is also a variational symmetry characteristic (by Noether's Theorem). Therefore, partitioning into a small number of equivalence classes is always possible. If an equivalence class is invariant under the symmetries that depend on $\gtt(t)$, group splitting gives a true double reduction for that class which mirrors the double reduction of order for Euler--Lagrange ODEs.

\section{Pseudoparabolic examples}

We illustrate the above general principles through application to pseudoparabolic PDEs of the form
\beq\label{jrk6.1}
\mcA:= -u_t+ D_x\{M(u) D_xD_t\Psi(u) \}=0,\qquad M(u)\Psi'(u)\neq 0.
\eeq
Such equations, typically with an additional diffusion term, arise in a variety of applications (see Barenblatt \textit{et al.} \cite{barenblatt} and Cuesta \& Hulshof \cite{cuestahul}, for example), with the limit case of negligible diffusivity, as in (\ref{jrk6.1}), being of specific interest (e.g. King \cite{king}). The general theory furnishes results for these PDEs that we believe to be new.

We begin with the restriction
\beq\label{jrk6.2}
M(u)=\Psi'(u),
\eeq
which enables \eqref{jrk6.1} be written as an Euler--Lagrange equation by substituting $u=w_x$, as follows:
\beq\label{psfourth}
\mcA:= -w_{xt}+ D_x\{\Psi'(w_x) D_xD_t\Psi(w_x) \}=0,\qquad \Psi'(w_x)\neq 0.
\eeq
The Lagrangian functional is
\beq\label{JRK6.3}
\mathcal{L}=\int\hspace{-0.2cm}\int\tfrac{1}{2}w_xw_t+
\tfrac{1}{2} \left(\Psi'(w_x)\right)^2
w_{xx}w_{xt}\,
\upd x\, \upd t,
\eeq
which is invariant under translations in $x$. By Noether's Theorem, the corresponding multiplier $\mcQ=w_x$ gives the conservation law
\beq\label{pstrans}
D_t\{-w_x^2-\tfrac{1}{2}(D_x\Psi(w_x))^2\}+D_x\{w_x\Psi'(w_x)D_xD_t\Psi(w_x)\}=0
\eeq

There are two families of multipliers that depend on arbitrary functions of $t$. The first is described in Example \ref{expot} and yields the potential version of (\ref{jrk6.1}), namely
\beq\label{jrk6.6}
w_{t}= \Psi'(w_x) D_xD_t\Psi(w_x).
\eeq
The second family of multipliers, $\mcQ=\gtt(t)w_t$, are characteristics for the variational symmetries 
\beq\label{transt}
t\mapsto h(t),\qquad h'(t)\neq 0,
\eeq
which are of course also symmetries of \eqref{jrk6.1}.
The conservation law arising from \eqref{clcon} is
\beq
D_x\left\{\gtt(t)\left(-\tfrac{1}{2}w_t^2+w_t\Psi'(w_x)D_xD_t\Psi(w_x)-\tfrac{1}{2}(D_t\Psi(w_x))^2\right)\right\}=0,
\eeq
which gives the first integral
\beq
-w_t^2+2w_t\Psi'(w_x)D_xD_t\Psi(w_x)-(D_t\Psi(w_x))^2=f(t).
\eeq
Taking \eqref{jrk6.6} into account simplifies this to
\beq\label{psred2}
w_t^2-(D_t\Psi(w_x))^2=f(t).
\eeq
The variational symmetries \eqref{transt} map \eqref{psred2} to
\beq
w_t^2-(D_t\Psi(w_x))^2=f(t)(h'(t))^{-2},
\eeq
splitting the first integral into three equivalence classes. For simple representatives of each class, we choose $f(t)\in\{0, \pm 1\}$ in \eqref{psred2}. The class with $f(t)=0$ consists of two cases:
\beq
D_t\Psi(w_x)\pm w_t=0,
\eeq
each of which is consistent with \eqref{jrk6.6} and admits the above variational symmetries. Consequently, this class reduces to a first-order ODE:
\beq\label{psvar0}
\Psi(w_x)\pm w=g(x),
\eeq
where $g(x)$ is arbitrary.

Reinstating $u$ as the dependent variable, (\ref{psred2}) with a nonzero right-hand side leads to two possibilities:
\[
u_t=\pm
D_x\!\left(\left\{\left(D_t\Psi\left(u\right)\right)^2 +f(t)\right\}^\frac{1}{2}\right),\qquad f(t)\neq 0.
\]
From \eqref{psvar0}, the remaining possibilities are
\[
D_x\Psi(u)\pm u=g'(x).
\]

We now turn to the general case of (\ref{jrk6.1}), in which (\ref{jrk6.2}) need not hold. A standard calculation (as in Anco \cite{anco}) shows that, for arbitrary $M$ and $\Psi$, all multipliers that depend on $(x,t,u)$ only are linear combinations of
\begin{equation}\label{pschar1}
	\mcQ_1=1,\qquad \mcQ_2=\int\frac{\Psi'(u)}{M(u)}\,\upd u.
\end{equation}
The corresponding conservation laws are $\mcA=0$ and
\begin{equation}\label{pscl2}
	D_t\{-\Phi_2(u)-\tfrac{1}{2}(D_x\Psi(u))^2\}+D_x\{\Phi_2'(u)M(u)D_xD_t\Psi(u)\}=0,\quad\text{where}\ \Phi_2(u)=\int\mcQ_2(u)\,\upd u.
\end{equation}
This conservation law amounts to \eqref{pstrans} when the restriction \eqref{jrk6.2} holds.

There are two special cases, each of which has a family of additional multipliers depending on an arbitrary function $\gtt(x)$. If $M(u)=1$, the family is $\mcQ=\gtt(x)$, so \eqref{clcon} yields the (obvious) conservation law
\begin{equation}
D_t\{\gtt(x)(-u+D_x^2\Psi(u))\}=0.
\end{equation}
The resulting first integral,
\begin{equation}\label{psfi1}
-u+D_x^2\Psi(u)=f(x),
\end{equation}
reduces the PDE to a family of ODEs.

The second special case occurs when $M(u)=\exp(-\mu \Psi(u))$, where $\mu$ is a nonzero constant. This gives the family of multipliers $\mcQ=\gtt(x)\mu^{-1}\exp(\mu \Psi(u))$. The corresponding conservation laws \eqref{clcon} are
\begin{equation}
	D_t\left\{\gtt(x)\left(\mu^{-1}D_x^2\Psi(u)-\tfrac{1}{2}(D_x\Psi(u))^2- \Phi(u)\right)\right\}=0,\quad\text{where}\ \Phi(u)=\int\mu^{-1}\exp(\mu \Psi(u))\,\upd u.
\end{equation}
This leads to the first integral
\begin{equation}\label{psfi2}
	\mu^{-1}D_x^2\Psi(u)-\tfrac{1}{2}(D_x\Psi(u))^2-\Phi(u)=f(x).
\end{equation}
Again, a family of multipliers depending on $\gtt(x)$ has reduced the PDE to an ODE. Indeed, such a reduction always occurs when a PDE has $t$-derivatives of at most first order and multipliers that depend on an arbitrary $\gtt(x)$. Every PDE \eqref{jrk6.1} has the family of point symmetries \eqref{transt}. However, these are trivial symmetries of the reduced ODEs \eqref{psfi1} and \eqref{psfi2}, so they do not provide any further simplification.

The doubly exceptional instances of these special cases, in the sense that (\ref{jrk6.2}) also applies, are $M(u)=1$, which gives the linear PDE
\[
u_t=u_{xxt}\,,\]
and
\[
\Psi=\ln (\mu u) /\mu, \qquad \Phi=u^2 /2,
\]
so that $M(u)=\exp(-\mu \Psi(u))=1 /(\mu u)$. In the latter case, setting $u=1 / \sigma^{2}$ in \eqref{psfi2} leads to Pinney's equation in the form
\[
\sigma_{xx}+\frac{\mu^{2} f(x)}{2}\,\sigma+\frac{\mu^{2}}{4 \sigma^{3}}=0,
\]
implying the integrability of the PDE. Special cases arise from (\ref{psfi2}) much more generally, in fact. Setting
\[
\Psi (u)= -\frac{2}{\mu}\ln \sigma
\]
gives
\[
\sigma_{xx}+\frac{\mu^{2}f(x)}{2}\,\sigma+\frac{\mu^{2}}{2}\,\sigma F (\sigma)=0
\]
where $F(\sigma)=\Phi(u)$, so a second integrable case arises for $F(\sigma)=1 / \sigma$, whereby, setting $\mu =-1$,
\[
u_t= 2 D_x\!\left(u^{2}D_xD_t\ln u\right)
\]
leads via $\sigma=u$ to
\[
u_{xx}+\frac{f (x)}{2}\,u=-\,\frac{1}{2}\,.
\]

That (\ref{transt}) represents a symmetry of (\ref{jrk6.1}) and (\ref{jrk6.6}) for any non-constant $h(t)$ allows a reduction of order in general, as in the familiar ODE theory. Since $x, u, w$ and $p:= u_x$ are each invariant under (\ref{transt}), the upshot is that the third-order PDE (\ref{jrk6.1}) can be reduced to a second-order one (in divergence form) for $p(u,x)$, namely
\beq\label{jrk6.10}
D_x\!\left\{ M(u)D_u\left(p\Psi'(u) \right)\right\}+
D_u\!\left\{p M(u)D_u\! \left(p\Psi'(u) \right)\right\}=1,
\eeq
while for (\ref{jrk6.6}) it follows that $u(w,x)$ satisfies
\beq\label{jrk6.11}
\Psi'(u)D_w\!\left\{D_x \Psi(u)+uD_w\Psi(u) \right\}=1.
\eeq
That these two expressions are equivalent when $M(u)=\Psi'(u)$ follows on defining $\phi (u,x)$ by
\beq\label{jrk6.12}
\phi_u=\Psi'(u)\,D_u\!\left(p\Psi'(u)\right),
\eeq
whereby integration of (\ref{jrk6.10}) with respect to $u$ gives
\beq\label{phidef}
\phi_x + p\,\phi_u=u,
\eeq
without loss of generality. The left-hand side of \eqref{phidef} is $\phi_x$ at fixed $t$, so we can set $\phi=w$ and (\ref{jrk6.11}) and (\ref{jrk6.12}) are then equivalent. The above reductions proceed by eliminating $t$, the dependence upon which is reinstated as the function of integration resulting on solving
\[
u_x=p(u,x) \quad\text{or}\quad w_x=u(w,x),
\]
thereby reconstituting the third-order nature of the original PDE.

We conclude here with some comments about travelling-wave solutions
\[
u=u(z), \quad z=x-s(t),
\]
for which the corresponding special cases of the above results can be obtained from elementary ODE considerations: (\ref{jrk6.1}) becomes
\beq\label{jrk6.13}
\frac{du}{dz}=\frac{d}{dz}\left(M(u)\frac{d^{2}}{dz^{2}}\Psi(u)\right)
\eeq
so that
\beq\label{jrk6.14}
1=\frac{d}{du}\left(p\, M (u) \frac{d}{du}\left(p\,\Psi'(u) \right)\right),
\eeq
the special case of (\ref{jrk6.10}) that arises when $p=p (u)$. The obvious first integrals of (\ref{jrk6.13}) and (\ref{jrk6.14}) are also equivalent, and when (\ref{jrk6.2}) holds a further integration in the form
\[
u^2 + \alpha u + \beta = \left(p\,\Psi'(u) \right)^{2},
\]
for constant $\alpha$ and $\beta$ is also immediate, corresponding to the appropriate special case of (\ref{psred2}). As $p=\upd u/\upd z$, this case reduces completely to quadrature:
\[
x-s(t)=c\pm\int \frac{\Psi(u)}{\sqrt{u^2+\alpha u+\beta}}\,\upd u.
\]

\section{Further examples}

\bex
Every generalized Liouville equation in the hierarchy
\beq
\mcA^{(k)}:=D_x^{2k}u_{xy}-e^u=0,\qquad k\geq 0,
\eeq
has a variational formulation, with the Lagrangian
\[
L=\tfrac{1}{2}(D_x^{2k}u)u_{xy}-e^u.
\]
Such equations admit the variational symmetries
\beq\label{genLiousym}
\hat{x}=x,\qquad \hat{y}=h(y) \qquad \hat{u}=u-\ln\left(h^{'}(y)\right),\qquad h'(y)\neq 0, 
\eeq
and the corresponding multiplier is $\mcQ=g'(y)+g(y)u_y$. Applying Theorem \ref{bridge1} gives the first integral (for $k\geq 1$)
\beq\label{Lioufi}
\lambda= -D_x^{2k}u_{yy}+u_yD_x^{2k}u_y-\cdots+(-1)^{k-1}(D_x^{k-1}u_y)(D_x^{k+1}u_y)+\tfrac{1}{2}(-1)^k(D_x^k u_y)^2=f(y).
\eeq
For $k=0$, the first integral is \beq\label{Liou0fi}
\lambda=-u_{yy}+\tfrac{1}{2}u_y^2=f(y).
\eeq
For each $k$, the variational symmetries \eqref{genLiousym} map $\lambda$ to $(h'(y))^{-2}\lambda$, so take $f(y)\in\{0,\pm 1\}$ as representatives of each equivalence class. In particular, when $f(y)=0$, these symmetries give a reduction of order (with respect to $x$) by group splitting. To see this in action, we examine what happens for $k=0$ and $k=1$.  

The lowest-order invariants of the symmetries are
\[
x,\qquad p=u_x,\qquad q=u_{xy}e^{-u}\qquad r=u_{xx}.
\]
We derive a system of reduced equations for $q(x,p)$ and $r(x,p)$. The relationship between $q$ and $r$ is expressed by the first-order integrability condition
\beq\label{Liouint}
qr_p=u_{xxy}e^{-u}=q_x+rq_p+pq.
\eeq
The case $k=0$ (the Liouville equation) amounts to $q=1$, so the integrability condition has the solution
\[
r=\tfrac{1}{2}p^2+F(x),
\]
where $F$ is arbitrary. Together with the first integral \eqref{Liou0fi}, this gives the well-known reduction of the Liouville equation to a pair of ODEs.

More interestingly, the symmetry reduction for the case $k=1$ gives the ODE
\beq\label{Liou3}
qr_{pp}+q_pr_p+\tfrac{1}{2}q=0,
\eeq
coupled with the integrability condition \eqref{Liouint}.
This is supplemented by the first integral \eqref{Lioufi}, namely
\[
\lambda=-u_{xxyy}+u_yu_{xxy}-\tfrac{1}{2}u_{xy}^2=f(y).
\]
\eex

\bex
Consider the following time-independent curvature equation, which is relevant to capillary surfaces, for example (see King \textit{et al.} \cite{kingocken}):
\beq\label{jrk7.12}
\mcA:=\kappa(x,y)-\nabla\cdot\left(\frac{\nabla u}{|\nabla u|}\right)=0.
\eeq
This expresses the curvature of plane curves of constant $u$ (i.e. level sets), in terms of a given function $\kappa(x,y)$. Equation (\ref{jrk7.12}) is the Euler--Lagrange equation corresponding to the functional
\beq\label{jrk7.13}
\mathcal{L}=\int\hspace{-0.2cm}\int\left(|\nabla u|+\kappa(x,y)u\right)\,\upd x\,\upd y.
\eeq
While (\ref{jrk7.12}) is evidently invariant under $u\rightarrow U(u),\ U'(u)\neq 0$, these symmetries are not variational.

In the special case where $\kappa$ is independent of $x$,
there exists a family of multipliers, $\mcQ=\gtt(u)u_x$. These are nontrivial in the region where $u_x\neq 0$, so a conservation law-reduction can be obtained by a hodograph transformation, treating $u$ as an independent variable and $x$ as the dependent variable. To transform \eqref{jrk7.12}, without restricting $\kappa$ in advance, let $\phi(u,y)=\kappa(x(u,y),y)$ to obtain
the following family of conservation laws that hold, remarkably, for \emph{all} $\kappa(x,y)$:
\beq
D_y\left\{\gtt(u)\left(\frac{x_y}{(1+x_y^2)^{1/2}}+\Phi(u.y)\right)\right\}=0,\quad\text{where}\quad \Phi(u,y)=\int\phi(u,y)\,\upd y.
\eeq
This leads to the first integral
\[
\frac{x_y}{(1+x_y^2)^{1/2}}+\Phi(u.y)=f_1(u),
\]
from which \eqref{jrk7.12} can be solved by quadrature:
\[
x=f_2(u)\pm\int\frac{f_1(u)-\Phi(u,y)}{\{1-(f_1(u)-\Phi(u,y))^2\}^{1/2}}\,\upd y.
\]
Here $f_1$ and $f_2$ are arbitrary, subject to the constraint $|f_1(u)-\Phi(u,y)|< 1$ (which amounts to $u_x$ being real and nonzero). A similar reduction, with $y=y(x,u)$, is applicable in regions where $u_y\neq 0$.
\eex

\section{Concluding remarks}
We have shown that conservation law multipliers that depend on functions of the independent variables can be used to simplify or solve PDEs. Reduced Euler--Lagrange systems generally inherit equivalence transformations from the multipliers, though a subset of solutions may be invariant, leading to a second reduction by the inherited symmetries.

Where there are two independent variables, the reduced PDE yields a first integral. More generally, if there are $p>2$ independent variables, reduction using multipliers that have arbitrary dependence on $s<p$ independent variables leads to a conservation law with $p-s$ components (or a first integral when $s=p-1$). Hodograph transformations enable $u$ to be used as an independent variable.

For variational problems, multipliers are characteristics of variational symmetries, from which Noether's theorems follow. Indeed, in the variational case, Theorem \ref{bridge1} reduces to a theorem in Hydon \& Mansfield \cite{hydman} that bridges the gap between Noether's theorems (and so has become known informally as Noether $1\tfrac{1}{2}$).

Throughout, we have imposed the restriction that arbitrary functions in multipliers depend only on whichever variables are being regarded as independent.
However, a nontrivial conservation law is zero only on solutions of the PDE, which are graphs $\mbx\mapsto (\mbx,[\mbu(\mbx)])$. So each $\gtt^r$ can depend on $(\mbx,[\mbu])$, subject to the system of constraints \eqref{constr} being satisfied for all solutions. (Where the arbitrary functions are unconstrained, this observation amounts to the substitution principle of Kiselev \cite{kiselev}, which is used in Olver \cite{olvN2} to prove that the variational symmetry characteristics associated with Noether's Second Theorem can depend on arbitrary functions of $(\mbx,[\mbu])$.) Nevertheless, the calculations are considerably easier if one restricts to functions $\gtt^r(\mbx)$, as Noether did.

\section*{Acknowledgements} We are grateful to the organizers of the 59th British Applied Mathematics Colloquium (BAMC), which led to the authors' collaboration in this research. The second author gratefully acknowledges a Leverhulme Trust Fellowship.
\medskip

\noindent Competing interests: The authors declare none.

\end{document}